\documentclass[]{amsart}

\usepackage[margin=1in]{geometry}
\usepackage[utf8]{inputenc}
\usepackage{amsmath,amssymb,amsthm,mathtools,mathrsfs}
\usepackage{enumitem}
\usepackage{hyperref}
\usepackage{orcidlink}
\newtheorem{thm}{Theorem}[section]

\newtheorem*{fixed point criterion}{\fixed point criterion}
\newtheorem{cor}[thm]{Corollary}
\newtheorem{lem}[thm]{Lemma}
\newtheorem{prop}[thm]{Proposition}

\theoremstyle{definition}
\newtheorem{defn}[thm]{Definition}

\newtheorem{ques}[thm]{Question}

\theoremstyle{remark}

\numberwithin{equation}{section}

\newcommand{\F}{\mathbb F}

\newcommand{\Z}{\mathbb Z}
\newcommand{\GL}{\operatorname{GL}}
\newcommand{\UT}{\operatorname{UT}}
\newcommand{\Aut}{\mathrm{Aut}}
\newcommand{\Out}{\mathrm{Out}}

\newcommand{\dgen}{\mathrm{rk}}

\title{Finitely generated Howson groups which are not strongly Howson}

\author[Ke Wang]{Ke Wang\orcidlink{0009-0005-7108-3725}}
	\address{School of Mathematics and Statistics, Xi'an Jiaotong University, Xi'an 710049, P. R. China}
	\email{keqiyehuopo@stu.xjtu.edu.cn}
	
	\author[Qiang Zhang]{Qiang Zhang\orcidlink{0000-0001-6332-5476}}
	\address{School of Mathematics and Statistics, Xi'an Jiaotong University, Xi'an 710049, P. R. China}
	\email{zhangq.math@mail.xjtu.edu.cn}
    
\subjclass[2020]{20E07, 20E32, 20F05}
\keywords{Howson property, strongly Howson groups, subgroup intersections, embedding theorems}
\date{\today}
\thanks{The authors are partially supported by NSFC (No. 12471066).}

\begin{document}

\begin{abstract}
    A group $G$ is called a Howson group if the intersection of any two finitely generated subgroups of $G$ is again finitely generated. It is called strongly Howson if, in addition, the rank of such an intersection is bounded only in terms of the ranks of the two subgroups. Strongly Howson groups are indeed Howson. Recently, Zhang and Zhao showed that the converse fails, by constructing the first examples of infinitely generated Howson groups which are not strongly Howson. In this note, we provide finitely generated examples with and without torsion elements, thereby answering a question posed by Zhang and Zhao.
\end{abstract}

\maketitle

\section{Introduction}

For a group $G$, let $\dgen(G)$ denote the \emph{rank}, i.e., the minimum cardinality of a generating set of $G$. For a group $G$ and positive integers $m,n$, let
$$\xi_G(m,n)= \sup\{~\dgen(H\cap K)\mid H,K\leq G,~ \dgen(H)\leq m,~ \dgen(K)\leq n~\},$$
where the supremum is allowed to be infinite. Since all subgroups of a cyclic group are cyclic, it follows that $\xi_G(1,n)=\xi_G(m,1)=1$.  Furthermore, we have:

\begin{defn}
A group $G$ is called a \emph{Howson group} if $H\cap K$ is finitely generated whenever $H,K\leq G$ are finitely generated. The group $G$ is called \emph{strongly Howson} if $\xi_G(m,n)<\infty$ for all positive integers $m,n$.
\end{defn}

The Howson property originates in Howson's theorem that the intersection of two finitely generated subgroups of a free group is again finitely generated \cite{Ho54}. Hanna Neumann subsequently gave uniform rank estimates for these intersections \cite{Ne56}. The modern strengthened form of the Hanna Neumann conjecture was proved independently by Friedman and Mineyev \cite{Fr14,Mi12}. Similar intersection questions have also been studied for free products, amalgamated products, HNN extensions and hyperbolic groups; see, for instance, \cite{Ba66,BB80,Ka97,Ar98,AMS14}.

The notion of a strongly Howson group was isolated explicitly by Ara\'ujo, Silva and Sykiotis \cite{ASS15} in their study of finite extensions. Clearly every strongly Howson group is Howson. Recently, Zhang and Zhao \cite{ZZ26} proved that the converse is false by constructing Howson groups which are not strongly Howson. As their examples fail to be finitely generated, they raised the following question.

\begin{ques}[Zhang-Zhao, 2024]\label{Question f.g}
Is there a finitely generated or finitely presented Howson group which is not strongly Howson?
\end{ques}

In this note, we answer this question affirmatively via the theorem below. Consequently, finite generation does not ensure that the Howson property implies the strongly Howson property.

\begin{thm}\label{thm:main}
There exists an infinite group $\mathcal{G}$ satisfying all the following properties: 
\begin{enumerate}
    \item $\mathcal{G}$ is a Howson but not strongly Howson group;
    \item $\mathcal{G}$ is a simple and $2$-generated group;
    \item $\mathcal{G}$ is a group with nontrivial torsion elements;
\end{enumerate}
\end{thm}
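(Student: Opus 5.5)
We will take a countable, infinitely generated Howson group $G_0$ that is not strongly Howson, for instance one of the Zhang--Zhao examples \cite{ZZ26}, and embed it into a finitely generated group using a construction that preserves the Howson property. The obstacle to strong Howsonness is inherited by any overgroup. If $H_k,K_k\le G_0$ have $\dgen(H_k)\le m$, $\dgen(K_k)\le n$ and $\dgen(H_k\cap K_k)\to\infty$, the same subgroups witness $\xi_{\mathcal G}(m,n)=\infty$ in any $\mathcal G\supseteq G_0$. Finite generation, simplicity and torsion are all arranged by the embedding. The real work is therefore to show that the Howson property survives the embedding.

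The embedding I would use is an Obraztsov/Ol'shanskii-type small cancellation construction over hyperbolic groups. Every countable group $G_0$ embeds into a $2$-generated simple group $\mathcal G$ in which every proper subgroup is either cyclic (finite or infinite) or conjugate into the image of $G_0$. I would modify $G_0$ beforehand: replace it by $G_0 * \mathbb Z/2$, or $G_0\times\mathbb Z/p$, still Howson and still not strongly Howson. This ensures $\mathcal G$ has nontrivial torsion, and it is infinite automatically. The steps are as follows.
\begin{enumerate}
\item Verify that $G_0\times\mathbb Z/p$ (or the chosen variant) is Howson and not strongly Howson. Intersections project to finite-index data, so ranks change by bounded amounts.
\item Quote the embedding theorem producing $\mathcal G$, which is $2$-generated and simple, contains $G_0$, and has the subgroup dichotomy.
\item Prove that $\mathcal G$ is Howson. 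Take finitely generated $H,K\le\mathcal G$. If either is cyclic, or if $H$ or $K$ is all of $\mathcal G$, the intersection is a subgroup of a cyclic group or equals the other subgroup, hence finitely generated. Otherwise $H=gAg^{-1}$ and $K=hBh^{-1}$ with $A,B\le G_0$ finitely generated. Then $H\cap K$ is a proper subgroup, so it is either cyclic or conjugate into $G_0$.
\end{enumerate}

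The main obstacle is the final case of step 3. There $H\cap K$ is conjugate into $G_0$, but $H$ and $K$ sit in possibly different conjugates of $G_0$, and we need $H\cap K$ to be finitely generated. I would first try to use a malnormality-type property of the embedding: $G_0\cap xG_0x^{-1}$ is trivial, or at least cyclic, for $x\notin G_0$. Such a property typically follows from the small cancellation construction, and Obraztsov's theorem can be chosen to guarantee it. Then either $H\cap K$ is cyclic, or $g^{-1}h\in G_0$. In the latter case $H\cap K=g(A\cap g^{-1}hBh^{-1}g)g^{-1}$, which is finitely generated because $G_0$ is Howson. If malnormality is not available directly, I would instead use a different embedding: either a malnormal HNN/amalgam embedding into a $2$-generated group, combined with known Howson-preservation results for such amalgams \cite{Ba66,BB80}, or a simple quotient thereof. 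The decisive check is that the chosen theorem provides both the subgroup dichotomy and this malnormality.
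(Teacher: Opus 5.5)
Your route differs from the paper's. The paper feeds the family of finite odd-order $p_n$-groups $D_n$ directly into Theorem~\ref{Ob:embedding}, so every proper subgroup of $\mathcal G$ is finite and the Howson property is immediate. You instead embed a single infinite Howson group $G_0$, so you must control intersections of subgroups lying in different conjugates of $G_0$. That part of your plan is fine, and your ``decisive check'' is settled by the theorem as stated. Item (3) makes each embedded $G_i$ maximal, and simplicity makes it self-normalizing. Item (5) then gives $G_0\cap xG_0x^{-1}=1$ for $x\notin G_0$. With this malnormality your case analysis in step 3 goes through.

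The genuine gap is the input group. Theorem~\ref{Ob:embedding} applies only to groups without involutions. Your claim that \emph{every} countable $G_0$ embeds with the subgroup dichotomy plus malnormality is false when $G_0$ has involutions. Suppose $t\in\mathcal G$ is an involution. Each $\langle t,gtg^{-1}\rangle$ is dihedral, hence proper, since $\mathcal G$ is infinite and simple. So it is either cyclic, which forces $gtg^{-1}=t$, or it lies in a conjugate of $G_0$. Unless $t$ is central, some such subgroup is non-cyclic and lies in a conjugate $M$ of $G_0$. By malnormality, $M$ is the only conjugate of $G_0$ containing $t$. Hence every $gtg^{-1}$ lies in $M$, and the normal closure of $t$ is proper. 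This contradicts simplicity in either case.

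So your variant $G_0*\mathbb Z/2$ is self-defeating. The whole argument then rests on the Zhang--Zhao group being countable and free of elements of order $2$, which you never check. What is missing is a countable, involution-free group in which $2$-generated subgroups have intersections of unbounded rank. That is exactly what the paper's explicit construction supplies: the primes $p_n$ are chosen odd precisely so that the $D_n$ have no involutions. Given those groups, your approach would also work with $G_0=\bigoplus_n D_n$, which is locally finite and hence Howson. The paper's direct use of the family $\{D_n\}$ avoids even that step.
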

In fact, the group $\mathcal G$ constructed in Theorem \ref{thm:main} fails to be virtually torsion-free or residually finite, and hence fails to be linear over any field (see Corollary \ref{cor:non-residual finiteness}).
Furthermore, note that the group $\mathcal{G}$ contains abundant torsion elements, as do the groups from \cite{ZZ26}. This naturally leads to the following question: Does there exist a torsion-free Howson group which is not strongly Howson? Our next theorem provides an affirmative answer.

\begin{thm}\label{thm:torsionfree}
There exists a $2$-generated, torsion-free, simple group $\mathcal G'$ which is Howson but not strongly Howson.  More precisely, $\xi_{\mathcal G'}(m,k)=\infty$ for any integers $m, k\geq 2$.
\end{thm}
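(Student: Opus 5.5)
We build $\mathcal G'$ by embedding a countable torsion-free group that already fails to be strongly Howson into a $2$-generated simple group, using an embedding that keeps the Howson property. The source of non-strong-Howsonness will be a torsion-free Howson group $A$ with $\xi_A(2,2)=\infty$. Monotonicity of $\xi$ under enlarging $m,k$ then gives $\xi(m,k)=\infty$ for all $m,k\ge2$. For each $n$ we want $2$-generated subgroups $H_n,K_n$ of some torsion-free group whose intersection is finitely generated of rank at least $n$.

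\textbf{Step 1: the building blocks.} Free groups are not suitable, because Hanna Neumann bounds the rank of intersections. Instead we take groups $B_n$ that are torsion-free, Howson, and contain $2$-generated subgroups $H_n,K_n$ with $\dgen(H_n\cap K_n)\ge n$. For instance, in $F_2\times\Z$ one can take graph-of-$\Z$-type subgroups. A safer choice is an amalgam $B_n=\langle a,b\rangle *_{C} \langle c,d\rangle$ of free groups, where $C\cong F_n$ is embedded suitably in both factors so that $C$ is exactly $H_n\cap K_n$ with $H_n=\langle a,b\rangle$ and $K_n=\langle c,d\rangle$. We must choose the embeddings, for example as malnormal subgroups, so that $B_n$ is Howson. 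Results for amalgams along malnormal quasiconvex subgroups give this: $B_n$ is then torsion-free hyperbolic and locally quasiconvex-friendly. We then set $A=*_n B_n$. By Baumslag's theorem (cited as \cite{Ba66}), a free product of Howson groups is Howson, and it is clearly torsion-free.

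\textbf{Step 2: embedding into a simple group.} We embed the countable torsion-free group $A$ into a $2$-generated torsion-free simple group $\mathcal G'$ by the classical route: HNN extensions and amalgams with free groups, in the style of Higman–Neumann–Neumann and Schupp/Ol'shanskii. This follows the earlier construction of Theorem~\ref{thm:main}, with each step chosen so that it preserves torsion-freeness. The point is that each step is either
\begin{itemize}
\item an amalgamated product or HNN extension over cyclic subgroups or over free factors, or
\item a small-cancellation quotient in which every finitely generated subgroup intersection can be controlled.
\end{itemize}
Intersections in $A$ are preserved, since $H_n\cap K_n$ computed in $\mathcal G'$ equals the one computed in $A$. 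Hence $\xi_{\mathcal G'}(2,2)=\infty$.

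\textbf{Main obstacle.} The hard part is showing that $\mathcal G'$ is Howson, because simplicity forces infinitely many relations. The approach we would try first is to use Ol'shanskii-type graded small cancellation over a torsion-free hyperbolic base. In that setting one can demand that every finitely generated subgroup is either cyclic or quasiconvex-like, or conjugate into an image of a $B_n$ factor. Intersections of two finitely generated subgroups then reduce, via a Bass–Serre or van Kampen diagram argument, to intersections inside the $B_n$, inside cyclic groups, or between conjugates of different factors, which are trivial or cyclic. Each $B_n$ is Howson, so the intersection is finitely generated.

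If this analysis is too delicate, the fallback is Ol'shanskii's theorem that every countable torsion-free hyperbolic group embeds in a $2$-generated torsion-free simple group all of whose proper subgroups are cyclic or conjugate into the image. The remaining risk is that $A$ is not hyperbolic, and we would need the relative version of that theorem.
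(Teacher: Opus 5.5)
There is a genuine gap, and it sits where you locate the main obstacle: Step~2 never produces a $2$-generated simple torsion-free group that is Howson.

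\textbf{Step 2.} The operations you list do not preserve the Howson property. Already $\Z^2*_{\Z}\Z^2\cong\langle a,b,c\mid [a,b],[a,c]\rangle\cong F_2\times\Z$ is an amalgam of two Howson groups over a cyclic subgroup, yet it is not Howson. The graded small-cancellation analysis is only announced as ``the approach we would try first''; there is no argument controlling intersections in the final quotient.

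Your fallback is the right kind of tool, but hyperbolicity is a red herring. What is needed is Obraztsov's embedding theorem \cite[Theorem C]{Ob96} with trivial $H$. It applies to any countable family $\{G_i\}$ of (at least three) countable groups, with no hyperbolicity assumption. When there are no involutions, it yields a simple group that is $2$-generated by any $x\in G_i\setminus\{1\}$, $y\notin G_i$, and in which every proper subgroup is infinite cyclic or conjugate into some $G_i$. Torsion-freeness then follows from this subgroup description. You should apply it to the family $\{B_n\}$ directly; the free product $A$ is unnecessary, and a single group is not covered by the theorem anyway.

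\textbf{The building blocks.} Even granting such a theorem, your blocks do not finish the proof. The subgroup description only places $H\cap K$ inside a conjugate of some block. Your $B_n$ contain non-abelian free groups, so their subgroups need not be finitely generated. The Howson property of a block helps only if $H$ and $K$ themselves lie in the same conjugate of it. That would require an extra malnormality statement (distinct conjugates of the blocks meet trivially), which you have not secured.

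Moreover, the Howson property of $B_n=\langle a,b\rangle*_{F_n}\langle c,d\rangle$ is asserted, not proved. Hyperbolicity does not imply the Howson property, and ``locally quasiconvex-friendly'' is not the hypothesis of any theorem. Your first suggestion, $F_2\times\Z$, is the standard non-Howson group.

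\textbf{The missing idea} is to use \emph{Noetherian} blocks. The paper takes the polycyclic groups $\widehat D_n=\Z^n\rtimes\langle\widehat P_n,\widehat Q_n\rangle$, with $\widehat P_n=I+J_n$ and $\widehat Q_n=I+J_n+J_n^2$. In these, $\langle e_1,\widehat P_n\rangle\cap\langle e_1,\widehat Q_n\rangle=\Z^n$. Every subgroup of a polycyclic group is finitely generated, so the subgroup description shows that every subgroup of $\mathcal G'$ is finitely generated. The Howson property is then immediate, with no analysis of intersections at all.
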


To prove Theorem~\ref{thm:main}, we first construct an infinite family of finite groups $D_n$ of odd order. Each group contains two $2$-generated subgroups whose intersection has rank $n$. We then embed this family into a $2$-generated simple group whose proper subgroups are tightly controlled, using Obraztsov's embedding construction \cite{Ob90}.  The proof of Theorem~\ref{thm:torsionfree} follows the same strategy, but replaces the finite elementary abelian intersections by free abelian groups and apply another embedding theorem of Obraztsov for groups without involutions \cite{Ob96}.


\section{The torsion construction\label{sect 2}}

\subsection{Finite groups with large intersections}

For every integer $n\geq 3$, choose an odd prime $p_n>n$ such that $p_m\neq p_n$ for any $m\neq n$. Let
\begin{equation}\label{eq. Vn}
  V_n=\F_{p_n}^n.  
\end{equation}
We regard $V_n$ as an elementary abelian $p_n$-group of rank $n$. Let $e_1,\ldots,e_n$ be the standard basis.  Define a nilpotent linear transformation $J_n$ by
$$J_ne_i=e_{i+1}\quad (1\leq i<n), \qquad J_ne_n=0.$$
Thus $J_n^n=0$ and $J_n^{n-1}\neq 0$. Denote
$$P_n=I+J_n,\qquad Q_n=I+J_n+J_n^2$$
in $\GL(V_n)$.

\begin{lem}\label{lem:linear}
The elements $P_n$ and $Q_n$ both have order $p_n$, and
$\langle P_n\rangle\cap \langle Q_n\rangle=1$
inside $\GL(V_n)$.
\end{lem}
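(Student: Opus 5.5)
We work in $\operatorname{End}(V_n)$ over $\F_{p_n}$ and use that $J_n$ is nilpotent with $J_n^n=0$ and $J_n^{n-1}\neq 0$. Since $n<p_n$, we have $J_n^{p_n}=0$. Because $I$ commutes with $J_n$, the binomial theorem in characteristic $p_n$ gives
$$P_n^{p_n}=I+J_n^{p_n}=I.$$
Set $N=J_n+J_n^2$, which is again nilpotent with $N^{p_n}=0$. The same argument gives $Q_n^{p_n}=I+N^{p_n}=I$. Neither $P_n$ nor $Q_n$ equals $I$, because $J_n\neq 0$ and $N\neq 0$ (recall $n\geq 3$). Since $p_n$ is prime, both elements have order exactly $p_n$.

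For the intersection, both cyclic groups have prime order $p_n$. So it suffices to show that $Q_n\notin\langle P_n\rangle$, i.e.\ that $Q_n\neq P_n^k$ for every $0\le k<p_n$. Expanding binomially,
$$P_n^k=\sum_{j\ge 0}\binom{k}{j}J_n^j.$$
The powers $I,J_n,\dots,J_n^{n-1}$ are linearly independent, since $J_n^j e_1=e_{1+j}$. We may therefore compare coefficients with $Q_n=I+J_n+J_n^2$. The coefficient of $J_n$ forces $k\equiv 1 \pmod{p_n}$. Then the coefficient of $J_n^2$ is $\binom{1}{2}=0\neq 1$. Here we need $n\ge 3$ so that $J_n^2\ne 0$ is independent of the lower powers. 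This is a contradiction, so $Q_n\notin\langle P_n\rangle$. As both groups have prime order, $\langle P_n\rangle\cap\langle Q_n\rangle$ is a proper subgroup of $\langle Q_n\rangle$ and hence trivial.

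The only point needing care is the justification of the binomial expansions. They hold because $J_n$ commutes with $I$, and $p_n$ dividing $\binom{p_n}{j}$ for $0<j<p_n$ makes the middle terms vanish. The hypothesis $p_n>n$ is what gives $J_n^{p_n}=0$. No step here is a real obstacle; the argument is a direct computation.
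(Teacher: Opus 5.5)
Your proof is correct and follows essentially the paper's argument: Frobenius gives $P_n^{p_n}=Q_n^{p_n}=I$, and triviality of the intersection comes from comparing the coefficients of $J_n$ and $J_n^2$ in $\F_{p_n}[J_n]$. The only differences are cosmetic. You use primality of $p_n$ to get the exact orders and to reduce to showing $Q_n\notin\langle P_n\rangle$, whereas the paper solves $P_n^a=Q_n^b$ directly.
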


\begin{proof}
Write $p=p_n$ and $J=J_n$. Since $p>n$, a straightforward calculation shows that $J^p=(J+J^2)^p=0$. In characteristic $p$, we have
$$(I+J)^p=I+J^p=I,\qquad (I+J+J^2)^p=I+J^p+J^{2p}=I.$$
Thus the orders of $P_n$ and $Q_n$ divide $p$. If $1\leq a<p$, then the coefficient of $J$ in $(I+J)^a$ is $a\neq 0$ in $\F_p$, so $(I+J)^a\neq I$. The same coefficient argument applied to $(I+J+J^2)^a$ shows that $Q_n^a\neq I$. Hence both orders are exactly $p$.

It remains to prove that the two cyclic subgroups meet trivially. Suppose
$$ P_n^a=Q_n^b$$
for some $a,b\in\F_p$, where we identify exponents modulo $p$. The elements $I,J,\ldots,J^{n-1}$ are linearly independent in the algebra $\F_p[J]$, so we may compare coefficients of powers of $J$.  Comparing the coefficient of $J$ gives $a=b$. The coefficient of $J^2$ in
$P_n^a=(I+J)^a$ is $\binom{a}{2}, $ 
while the coefficient of $J^2$ in $Q_n^b=(I+J+J^2)^b$ is $b+\binom{b}{2}.$  Since $a=b$, equality of the two coefficients gives $b=0$ in $\F_p$, and hence $a=0$. Therefore $P_n^a=Q_n^b=I$, as required.
\end{proof}

We now denote
$$ L_n=\langle P_n,Q_n\rangle\leq \GL(V_n).$$
Since $P_n$ and $Q_n$ are upper unitriangular matrices, $L_n$ is a subgroup of the unitriangular group $\UT_n(\F_{p_n})$, and hence is a finite $p_n$-group. Furthermore, define the finite semidirect product
\begin{equation}\label{eq. Dn}
  D_n:=V_n\rtimes L_n.  
\end{equation}
Then we have:

\begin{prop}\label{prop:finitepieces}
For every $n\geq 3$, the group $D_n$ in Eq. (\ref{eq. Dn}) is a $p_n$-group of odd order containing subgroups $A_n$ and $B_n$ such that
$$ \dgen(A_n)\leq 2,\qquad \dgen(B_n)\leq 2,\qquad A_n\cap B_n=V_n,\qquad \dgen(A_n\cap B_n)=n.$$
\end{prop}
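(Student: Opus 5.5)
The natural choice is $A_n=\langle e_1, P_n\rangle$ and $B_n=\langle e_1, Q_n\rangle$ inside $D_n=V_n\rtimes L_n$, where $e_1\in V_n$ and we identify $P_n,Q_n$ with their images in the complement $L_n$. Both subgroups are $2$-generated by construction. Since $V_n$ and $L_n$ are $p_n$-groups ($L_n\le \UT_n(\F_{p_n})$), $D_n$ is a $p_n$-group, and its order is odd because $p_n$ is odd. It remains to show $A_n\cap B_n=V_n$; then $\dgen(A_n\cap B_n)=\dgen(\F_{p_n}^n)=n$, since an elementary abelian group of rank $n$ needs exactly $n$ generators.

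First I show $V_n\le A_n$. Conjugation by $P_n$ acts on $V_n$ as the linear map $P_n=I+J_n$. So the $\F_{p_n}[P_n]$-submodule generated by $e_1$ lies in $A_n$ (additive closure suffices, since $\F_{p_n}$-scalars are integers mod $p_n$). This submodule contains $P_ne_1-e_1=J_ne_1=e_2$, and inductively $J_n^{k}e_1=e_{k+1}$. Hence it is all of $V_n$. The same argument with $Q_n$ works: $(Q_n-I)e_1=J_ne_1+J_n^2e_1$, and $Q_n-I=J_n(I+J_n)$ with $I+J_n$ invertible. So the powers $(Q_n-I)^ke_1$ form a triangular basis with $e_{k+1}$ as leading term, and $V_n\le B_n$.

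Consequently $A_n=V_n\rtimes\langle P_n\rangle$ and $B_n=V_n\rtimes\langle Q_n\rangle$, because $V_n$ is normal in $D_n$ and $A_n$ is generated by $V_n$ together with $P_n$. Take $g\in A_n\cap B_n$ and write $g=vP_n^a=wQ_n^b$ with $v,w\in V_n$. Project onto $L_n\cong D_n/V_n$; this gives $P_n^a=Q_n^b$ in $L_n\le\GL(V_n)$. By Lemma \ref{lem:linear}, this common element is $I$, so $g\in V_n$. Thus $A_n\cap B_n=V_n$.

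The only step needing care is the module-generation claim for $Q_n$, which I handle by the triangularity argument above. Everything else is immediate from Lemma \ref{lem:linear} and the semidirect product structure.
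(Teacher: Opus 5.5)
Your proposal is correct and follows the paper's proof essentially step for step. You use the same subgroups $A_n=\langle e_1,P_n\rangle$ and $B_n=\langle e_1,Q_n\rangle$, the same triangularity argument via $Q_n-I=J_n(I+J_n)$ to get $V_n\le B_n$, and the same projection onto $L_n$ combined with Lemma~\ref{lem:linear} to conclude $A_n\cap B_n=V_n$.
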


\begin{proof}
Since both $V_n$ and $L_n$ are finite $p_n$-groups, $D_n=V_n\rtimes L_n$ is also a finite $p_n$-group. Inside $D_n$, define
\begin{equation}\label{eq. An Bn}
   A_n=\langle e_1,P_n\rangle, \qquad B_n=\langle e_1,Q_n\rangle. 
\end{equation} 
We first show that $e_1$ generates the whole normal subgroup $V_n$ under the actions of both $P_n$ and $Q_n$. For $P_n$ this is immediate from $P_n-I=J_n$, because
$$ e_1,\ J_ne_1,\ J_n^2e_1,\ldots,\ J_n^{n-1}e_1$$
is precisely the basis $e_1,\ldots,e_n$.

For $Q_n$, denote $R_n=Q_n-I=J_n+J_n^2=J_n(I+J_n)$. Since $J_n$ commutes with $I+J_n$,
$$ R_n^k e_1=J_n^k(I+J_n)^k e_1 =e_{k+1}+\mathbf{v}_k\cdot (e_{k+2}~,\ldots,~e_n)^{\mathrm T}$$
for $0\leq k\leq n-1$ and $\mathbf{v}_k\in\F_{p_n}^{n-k-1}$. Hence the vectors
$$ e_1,\ R_ne_1,\ R_n^2e_1,\ldots,\ R_n^{n-1}e_1$$
form a basis of $V_n$ by triangularity. It follows that
$$ A_n=V_n\rtimes \langle P_n\rangle, \qquad B_n=V_n\rtimes \langle Q_n\rangle.$$

Let $\pi:D_n=V_n\rtimes L_n\to L_n$ be the natural projection.  If $x\in A_n\cap B_n$, then
$$ \pi(x)\in \langle P_n\rangle\cap \langle Q_n\rangle.$$
By Lemma~\ref{lem:linear}, this intersection is trivial. Therefore $x\in \ker\pi=V_n$. Conversely $V_n$ is contained in both $A_n$ and $B_n$, and so
$$ A_n\cap B_n=V_n.$$
Now combining Eq. (\ref{eq. Vn}) with Eq. (\ref{eq. An Bn}), we have obtained $$\dgen(A_n)\leq 2, \quad\dgen(B_n)\leq 2, \quad \dgen(A_n\cap B_n)=n.$$
This completes the proof.
\end{proof}

\subsection{Obraztsov's embedding theorem I}


The following embedding theorem from \cite[Theorem]{Ob90} plays a key role in the proof of Theorem \ref{thm:main}.

\begin{thm}[Obraztsov, \cite{Ob90}]\label{Ob:embedding}
Let $\mathscr{G}=\{G_i\}_{i\in I}$ be a countable family of nontrivial groups of finite or at most countable cardinality without involution, and let $q$ be an arbitrary sufficiently large fixed odd integer (for example $q>2\cdot 10^{77}$). Then there exists a countable group $G$ such that
\begin{enumerate}
\item every $G_i\in\mathscr{G}$ can be embedded in $G$ such that $G_i\cap G_j=\{1\}$ whenever $i\neq j$;
    \item $G$ is a simple group;
    \item $G$ is $2$-generated by any $d_1$ and $d_2$ with $d_1\in G_i\backslash\{1\}$ for $G_i\in\mathscr{G}$ and $d_2\in G\backslash G_i$;
    \item any proper subgroup of $G$ is either a cyclic group whose order divides $q$ or else is conjugate into some group $G_i\in\mathscr{G}$. 
    \item any two distinct maximal subgroups of $G$ intersect trivially.
\end{enumerate}
\end{thm}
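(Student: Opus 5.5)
This is the embedding theorem of \cite{Ob90}, and we would not reprove it from scratch in this note. The natural route is Ol'shanskii's theory of graded diagrams over free products, which is the same machinery that produces Tarski monsters. We start from the free product $F=\ast_{i\in I}G_i$, with an extra infinite cyclic factor adjoined if needed so that $F$ is not generated by conjugates of a single $G_i$. We then build $G$ as a direct limit of quotients
$$F=G(0)\to G(1)\to G(2)\to\cdots,$$
where $G(r)$ is obtained from $G(r-1)$ by imposing relations of rank $r$. A word is \emph{simple} in rank $r-1$ if it is not conjugate in $G(r-1)$ to a power of a shorter word or into a factor $G_i$. Following Ol'shanskii, the relations are of two kinds:
\begin{enumerate}
\item[(a)] for each period $A$ of rank $r$, a periodic relation $A^q=1$;
\item[(b)] for each pair $(d_1,d_2)$ with $d_1$ in some factor and $d_2$ not in that factor, up to conjugacy, relations of small cancellation type in $d_1,d_2$ expressing a fixed pair of free generators (and hence every factor) as words in $d_1,d_2$.
\end{enumerate}
These words must be chosen long and aperiodic enough to satisfy the graded small cancellation conditions with parameters tied to $q$; this is where ``$q$ sufficiently large'' enters. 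The countability hypotheses allow all required pairs to be enumerated.

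The key step is to verify that each $G(r)$, and hence $G$, satisfies the graded small cancellation conditions relative to the free product structure. From this we would derive, in order:
\begin{enumerate}
\item[(i)] Each $G_i$ embeds in $G$, and $G_i^g\cap G_j=1$ for $i\ne j$. This follows because any relation word has large length relative to the free product and so cannot collapse a single factor, via a Greendlinger-type lemma for reduced diagrams.
\item[(ii)] The relations of type (b) force property (3). Every nontrivial normal subgroup contains such a $d_1$ together with some conjugate outside that factor, which gives simplicity.
\item[(iii)] For property (4), let $H$ be a proper subgroup. If $H$ lies in no conjugate of a factor, then either $H$ contains two non-commuting elements, which by the type (b) relations generate $G$, or all elements of $H$ are powers of a single period and $H$ is cyclic of order dividing $q$. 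The torsion-free and involution-free hypotheses are needed to rule out exceptional commuting configurations, such as dihedral-type subgroups.
\item[(v)] Property (5) then follows from (iii) together with the malnormality of the factors and of the maximal cyclic subgroups.
\end{enumerate}

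The main obstacle is the construction of the type (b) relations and the proof that they are compatible with the graded small cancellation estimates, that is, the contiguity and annulus lemmas of Ol'shanskii's Chapter~7 adapted to free products. The ``no involutions'' hypothesis is indispensable there: an involution would allow $d_1$ and a conjugate to generate an infinite dihedral group, which could never be made to generate $G$. We would first try to transcribe Ol'shanskii's proof of the existence of Tarski monsters with prescribed factors, replacing cancellation in free groups by cancellation relative to the factors. In the paper itself, the theorem is simply cited from \cite{Ob90}.
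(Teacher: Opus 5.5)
Your bottom line matches the paper's: the paper gives no argument for this statement and simply imports it from \cite{Ob90}, so citing it is all that is needed here. The sketch you attach, however, would not prove the theorem as written. The main gap is in step (iii). Your type (b) relations are imposed only for pairs $(d_1,d_2)$ with $d_1$ in a factor, yet you use them to conclude that two arbitrary non-commuting elements of a subgroup $H$ lying in no conjugate of a factor generate $G$. Such an $H$ may contain no nontrivial element of any conjugate of a factor (all its elements could be conjugates of powers of periods), and nothing you imposed forces two such elements to generate $G$. To obtain (4) you need, as in the Tarski monster construction, relations for \emph{every} pair $(X,Y)$ which in rank $r-1$ lies neither in a cyclic subgroup nor in a conjugate of a factor. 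These relations should express each element of a generating set of the free product (e.g.\ all of $\bigcup_i G_i$) as words in $X,Y$; there is no ``fixed generating pair'' when $I$ is infinite. Property (3) is then the special case obtained once you check that $(d_1,d_2)$ is such a pair.

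There are also three smaller errors. First, the theorem has no torsion-freeness hypothesis: the $G_i$ need only lack involutions, and the paper applies it to the finite $p_n$-groups $D_n$. So no step may use torsion-freeness of the factors, contrary to your remark in (iii). Second, an extra infinite cyclic \emph{factor} would embed by your own step (i). It would then be a proper infinite cyclic subgroup not conjugate into any $G_i$, contradicting (4). If a second factor is ever needed (only when $|I|=1$), adjoin $\Z/q\Z$ instead. Third, in (ii) the claim that every nontrivial normal subgroup contains a nontrivial element of a conjugate of a factor is not automatic, since a priori its elements could all have order dividing $q$ and avoid every such conjugate. Simplicity therefore needs a separate argument, for instance from (3), (4), malnormality of the factors, and the fact that $G$ is infinite.
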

Recall that a group element is called an \emph{involution} if it is of order $2$.


\subsection{Proof of Theorem~\ref{thm:main}}
 By Proposition \ref{prop:finitepieces}, every $D_n$ is a 
$p_n$-group of odd order for any $n\geq 3$, so the family $\mathscr{D}:=\{D_i\}_{i=3}^\infty$ is a countable family of nontrivial finite groups without involutions. Applying Theorem \ref{Ob:embedding} to the family $\mathscr{D}$, we obtain a group, denoted by $\mathcal G$, for Theorem \ref{thm:main}. 

\begin{prop}\label{prop:howson}
The group $\mathcal G$ is Howson.
\end{prop}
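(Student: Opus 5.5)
The plan is to use property (4) of Theorem~\ref{Ob:embedding}: every proper subgroup of $\mathcal G$ is either cyclic of order dividing $q$, or conjugate into some $D_n$. In either case it is finite. So the only infinite subgroup is $\mathcal G$ itself, and all other subgroups are finite, hence finitely generated.

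Let $H,K\le\mathcal G$ be finitely generated, and split into cases.

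\textbf{Case $H=\mathcal G$.} Then $H\cap K=K$, which is finitely generated by hypothesis. The case $K=\mathcal G$ is symmetric.

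\textbf{Case $H$ and $K$ both proper.} By property (4), $H$ is either cyclic of order dividing $q$, or equals $gSg^{-1}$ for some $g\in\mathcal G$ and some $S\le D_n$. In both subcases $H$ is finite, since each $D_n$ is a finite $p_n$-group by Proposition~\ref{prop:finitepieces}. Then $H\cap K\le H$ is a finite group, hence finitely generated. Only finiteness of one factor is needed here; we do not need to compare $H$ and $K$.

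The only point needing care is that property (4) really yields finiteness of every proper subgroup. This holds because the family $\mathscr D$ consists of finite groups. Note that the rank of $H\cap K$ is still unbounded: it can be as large as $\max_n n$ through the subgroups $A_n,B_n$ inside the embedded copies of $D_n$. That is consistent with the statement, which asserts only finite generation, not a uniform bound. There is essentially no obstacle here; the work lies in the construction, which is already done.
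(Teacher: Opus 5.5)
Your proof is correct and follows the paper's argument: both rest on property (4) of Theorem~\ref{Ob:embedding} together with the finiteness of the $D_n$, which makes every proper subgroup of $\mathcal G$ finite. The only cosmetic differences are these: the paper applies (4) directly to $H\cap K$ and uses the $2$-generation of $\mathcal G$ when $H=K=\mathcal G$, whereas you apply (4) to the factor $H$ and handle $H=\mathcal G$ via the hypothesis on $K$; also, your side remark about ``$\max_n n$'' should simply say that the ranks are unbounded.
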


\begin{proof}
Let $H,K\leq \mathcal G$ be two finitely generated subgroups. 
If $H=K=\mathcal G$, then $H\cap K=\mathcal G$, which is 2-generated by Theorem \ref{Ob:embedding}(3).  
We now assume that one of $H$ and $K$ is a proper subgroup of $\mathcal G$. By Theorem~\ref{Ob:embedding}(4), each proper subgroup of $\mathcal G$, in particular $H\cap K$, is either cyclic of finite order, or is contained in a conjugate of one of the finite groups $D_n$.  Hence $H\cap K$ is finitely generated, and therefore $\mathcal G$ is Howson.
\end{proof}


\begin{prop}\label{prop:notstrong}
The group $\mathcal G$ is not strongly Howson.  In fact, $\xi_\mathcal G(m,k)=\infty$ for any integers $m, k\geq 2$.
\end{prop}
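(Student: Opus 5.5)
The plan is to show directly that $\xi_{\mathcal G}(2,2)=\infty$. The other cases then follow from monotonicity: for $m,k\geq 2$, any subgroup of rank at most $2$ also has rank at most $m$ (respectively $k$), so
$$\xi_{\mathcal G}(m,k)\geq \xi_{\mathcal G}(2,2).$$
This reduction will be stated at the start.

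For each $n\geq 3$, Theorem~\ref{Ob:embedding}(1) provides an embedding $D_n\hookrightarrow\mathcal G$. We identify $D_n$ with its image and take the subgroups $A_n,B_n\leq D_n\leq\mathcal G$ from Proposition~\ref{prop:finitepieces}. A subgroup of $D_n$ generated by two elements is still generated by those two elements when viewed inside $\mathcal G$, so $\dgen(A_n)\leq 2$ and $\dgen(B_n)\leq 2$ as subgroups of $\mathcal G$. Their intersection is computed inside $D_n$ and does not depend on the ambient group, so $A_n\cap B_n=V_n$ in $\mathcal G$ as well.

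The point that needs care is that rank is intrinsic to the abstract group. $V_n\cong(\Z/p_n\Z)^n$ is elementary abelian of rank $n$: it is an $n$-dimensional vector space over $\F_{p_n}$, and any generating set spans it, so it has at least $n$ elements. Hence $\dgen(A_n\cap B_n)=n$ regardless of how $V_n$ sits inside $\mathcal G$. Therefore $\xi_{\mathcal G}(2,2)\geq n$ for every $n\geq 3$, which gives $\xi_{\mathcal G}(2,2)=\infty$, and combined with Proposition~\ref{prop:howson} this shows $\mathcal G$ is Howson but not strongly Howson.

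There is no real obstacle here. The only thing to check is that the embeddings from Theorem~\ref{Ob:embedding}(1) are genuine injective homomorphisms, so that intersections and ranks computed in $D_n$ carry over unchanged to $\mathcal G$.
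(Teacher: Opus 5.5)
Your proposal is correct and follows essentially the same route as the paper: identify each $D_n$ with its embedded copy, use the $2$-generated subgroups $A_n,B_n$ with $A_n\cap B_n=V_n$ of rank $n$ to get $\xi_{\mathcal G}(2,2)=\infty$, and conclude by monotonicity $\xi_{\mathcal G}(m,k)\geq\xi_{\mathcal G}(2,2)$. Your explicit remarks that intersections are unchanged under an injective embedding and that the rank of $V_n$ is intrinsic are points the paper uses without stating them.
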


\begin{proof}
For every $n\geq 3$, the embedded copy of $D_n$ contains the subgroups
$$ A_n=\langle e_1,P_n\rangle, \qquad B_n=\langle e_1,Q_n\rangle$$
from Proposition \ref{prop:finitepieces}.  These subgroups are $2$-generated, and their intersection in $G$ is the same as their intersection in $D_n$, namely
$$ A_n\cap B_n=V_n.$$
Therefore
$$ \dgen(A_n\cap B_n)=\dgen(V_n)=n.$$
Since $n$ is arbitrary, the ranks of intersections of two $2$-generated subgroups of $\mathcal G$ are unbounded.  Thus $\xi_\mathcal G(m,k)\geq \xi_\mathcal G(2,2)=\infty$  for any integers $m, k\geq 2$.
\end{proof}

\begin{proof}[\textbf{Proof of Theorem~\ref{thm:main}}]
Theorem~\ref{Ob:embedding} ensures that $\mathcal G$ is an infinite, $2$-generated simple group with nontrivial torsion elements. Propositions~\ref{prop:howson} and \ref{prop:notstrong} imply that $\mathcal G$ is Howson but not Strongly Howson.
\end{proof}

\subsection{Residual finiteness, torsion, and linearity}

\begin{cor}\label{cor:non-residual finiteness}
The group $\mathcal G$ has no proper subgroup of finite index (and hence has no nontrivial finite quotient). In particular, $\mathcal G$ is not virtually torsion-free, not residually finite,  and hence not linear over any field.
\end{cor}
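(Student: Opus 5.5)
The plan is to use simplicity of $\mathcal G$, which Theorem~\ref{Ob:embedding}(2) provides, together with the fact that $\mathcal G$ is infinite.

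\textbf{No proper finite-index subgroup.} Suppose $H\leq\mathcal G$ has finite index. Its normal core $N=\bigcap_{g\in\mathcal G} gHg^{-1}$ is a normal subgroup of finite index, since it is the kernel of the action of $\mathcal G$ on the finitely many cosets of $H$. By simplicity, $N=1$ or $N=\mathcal G$. If $N=1$, then $\mathcal G$ embeds in a finite symmetric group and so is finite. This is impossible, because $\mathcal G$ contains copies of all the $D_n$, whose orders are unbounded. Hence $N=\mathcal G$, and so $H=\mathcal G$. The same argument shows that any homomorphism to a finite group has finite-index kernel, so the kernel is all of $\mathcal G$. Thus there is no nontrivial finite quotient.

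\textbf{Residual finiteness.} If $\mathcal G$ were residually finite, then for a nontrivial $g$ there would be a finite-index normal subgroup avoiding $g$. Such a subgroup is proper, which contradicts the previous step.

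\textbf{Virtual torsion-freeness.} A torsion-free subgroup of finite index would have to be $\mathcal G$ itself. But $\mathcal G$ has nontrivial torsion, for example $e_1\in V_3\leq D_3$. So $\mathcal G$ is not virtually torsion-free.

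\textbf{Linearity.} I will invoke Mal'cev's theorem: a finitely generated linear group over any field is residually finite. Since $\mathcal G$ is $2$-generated by Theorem~\ref{Ob:embedding}(3), linearity would contradict the residual-finiteness step.

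The only point that needs any care is recording that $\mathcal G$ is infinite and citing Mal'cev's theorem correctly. Everything else is formal.
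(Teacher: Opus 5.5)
Your proposal is correct and follows essentially the same route as the paper: the normal core of a finite-index subgroup contradicts the simplicity of the infinite group $\mathcal G$, the failures of residual finiteness and virtual torsion-freeness follow formally, and Mal'cev's theorem for the $2$-generated group gives non-linearity. You are slightly more explicit than the paper in justifying that $\mathcal G$ is infinite and has nontrivial torsion via the embedded $D_n$; the nontorsion point is exactly what the ``not virtually torsion-free'' claim needs.
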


\begin{proof}
Suppose first that $\mathcal G$ has a proper subgroup $M$ of finite index. Since $\mathcal G$ is $2$-generated and infinite, the normal core
$$N:=\bigcap_{g\in \mathcal G}gMg^{-1}$$
of $M$ is a nontrivial, proper, normal subgroup of finite index in $\mathcal{G}$, contradicting that $\mathcal G$ is simple. Therefore $\mathcal G$ has no proper subgroup of finite index.  Consequently, $\mathcal G$ is neither virtually torsion-free nor residually finite. By Mal’cev’s result, all finitely generated linear groups are residually finite (see \cite{Ma40} or \cite{Ni13}). Therefore, $\mathcal G$ is not linear over any field.
\end{proof}




\section{The torsion-free construction}\label{sec:torsionfree}

We now give a torsion-free variant. The finite vector spaces used above are replaced by free abelian groups, and Obraztsov's embedding theorem (Theorem \ref{Ob:embedding}) is replaced by Obraztsov's relative embedding theorem (Theorem \ref{bb:obraztsovC}).

\subsection{Torsion-free pieces with large intersections}

For every integer $n\geq 3$, let $W_n:=\Z^n$ with standard basis $e_1,\ldots,e_n$. Let $J_n$ be the integral nilpotent Jordan block
$$ J_ne_i=e_{i+1}\quad (1\leq i<n), \qquad J_ne_n=0.$$
Define
$$ \widehat P_n=I+J_n,\qquad \widehat Q_n=I+J_n+J_n^2$$
in $\GL_n(\Z)$.

\begin{lem}\label{lem:tflinear}
The cyclic subgroups $\langle \widehat P_n\rangle$ and $\langle \widehat Q_n\rangle$ are infinite cyclic and intersect trivially. In particular,
$$ \widehat L_n:=\langle \widehat P_n,\widehat Q_n\rangle\cong \Z^2.$$
\end{lem}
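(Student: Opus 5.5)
The plan is to mirror the proof of Lemma~\ref{lem:linear}, working now in the commutative algebra $\mathbb{Q}[J]$. Write $J=J_n$. Since $J^n=0$ and $n\geq 3$, the powers $I,J,J^2,\ldots,J^{n-1}$ are linearly independent over $\mathbb{Q}$; in particular $I,J,J^2$ are. Both $\widehat P_n$ and $\widehat Q_n$ are unipotent with integer entries, so they lie in $\GL_n(\Z)$. Their inverses are $\sum_{k\geq 0}(-1)^k(J)^k$ and $\sum_{k\geq 0}(-1)^k(J+J^2)^k$, both finite sums with integer coefficients.

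For every integer $a$ (negative values included), the binomial series gives
$$\widehat P_n^{\,a}=(I+J)^a=I+aJ+\tbinom{a}{2}J^2+\cdots,$$
and similarly
$$\widehat Q_n^{\,b}=(I+(J+J^2))^b=I+bJ+\Bigl(b+\tbinom{b}{2}\Bigr)J^2+\cdots,$$
where $\binom{a}{2}=a(a-1)/2$ and the series truncates because $J$ is nilpotent.

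\textbf{Infinite order.} If $a\neq 0$, the coefficient of $J$ in $\widehat P_n^{\,a}$ is $a\neq 0$, so $\widehat P_n^{\,a}\neq I$. The same argument with $b$ shows $\widehat Q_n^{\,b}\neq I$ for $b\neq 0$. Hence both cyclic subgroups are infinite cyclic.

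\textbf{Trivial intersection.} Suppose $\widehat P_n^{\,a}=\widehat Q_n^{\,b}$ with $a,b\in\Z$. Comparing coefficients of $J$ gives $a=b$. Comparing coefficients of $J^2$, which is legitimate because $n\geq 3$, gives $\binom{a}{2}=b+\binom{b}{2}$. Substituting $a=b$ forces $b=0$, so the common element is $I$. This is easier than in Lemma~\ref{lem:linear}, since we work in characteristic zero and never reduce exponents modulo $p$.

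\textbf{The group $\widehat L_n$.} Both generators are polynomials in $J$, so they commute and $\widehat L_n$ is abelian. The map $\Z^2\to\widehat L_n$, $(a,b)\mapsto \widehat P_n^{\,a}\widehat Q_n^{\,-b}$, is a surjective homomorphism. An element of its kernel satisfies $\widehat P_n^{\,a}=\widehat Q_n^{\,b}$, which forces $a=b=0$ by the previous paragraph. So the map is injective and $\widehat L_n\cong\Z^2$.

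The only step needing care is stating the binomial expansion for negative exponents. Here $(I+N)^a=\sum_k\binom{a}{k}N^k$ holds for all $a\in\Z$ when $N$ is nilpotent and commutes with everything in sight, using generalized binomial coefficients. This is routine; there is no real obstacle.
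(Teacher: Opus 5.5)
Your proposal is correct and takes essentially the same approach as the paper: you expand both powers with the binomial formula (valid for negative exponents, modulo $J^3$), compare the coefficients of $J$ and $J^2$ using linear independence of the powers of $J$, and then use commutativity to get $\widehat L_n\cong\Z^2$. Your explicit check that $(a,b)\mapsto \widehat P_n^{\,a}\widehat Q_n^{\,-b}$ is an isomorphism just spells out the paper's final sentence.
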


\begin{proof}
The matrices $\widehat P_n$ and $\widehat Q_n$ are polynomials in $J_n$, so they commute. For every integer $a$ one has, modulo the ideal generated by $J_n^3$,
$$ (I+J_n)^a=I+aJ_n+\binom a2J_n^2,$$
where the usual polynomial formula for $\binom a2$ is valid for negative integers as well. Similarly, for every integer $b$,
$$ (I+J_n+J_n^2)^b =I+bJ_n+\left(b+\binom b2\right)J_n^2 \pmod{J_n^3}.$$
These formulae show first that $\widehat P_n$ and $\widehat Q_n$ have infinite order: the coefficient of $J_n$ is nonzero unless the exponent is zero.

Suppose now that $\widehat P_n^a=\widehat Q_n^b$ with $a,b\in\Z$.  Comparing the coefficient of $J_n$ gives $a=b$. Comparing the coefficient of $J_n^2$ then gives
$$ \binom a2=a+\binom a2,$$
and hence $a=0$. Here we use that $I,J_n,J_n^2$ are linearly independent in the subring $\Z[J_n]\leq M_n(\Z)$ for $n\geq 3$.  Thus $b=0$ as well. Therefore the two infinite cyclic subgroups intersect trivially. Since the two generators commute, we obtain $\widehat L_n\cong \Z^2$.
\end{proof}

We now define
$$\widehat D_n:=W_n\rtimes \widehat L_n\cong \Z^n\rtimes \Z^2.$$
This is a torsion-free polycyclic group. Indeed, both $W_n$ and $\widehat L_n$ are torsion-free, and any torsion element of $\widehat D_n$ would project to a torsion element of $\widehat L_n$ and then lie in $W_n=\Z^n$. Furthermore, we have:

\begin{prop}\label{prop:tfpieces}
For every $n\geq 3$, the group $\widehat D_n$ is countable, torsion-free and polycyclic, and it contains subgroups $\widehat A_n$ and $\widehat B_n$ such that
$$ \dgen(\widehat A_n)\leq 2,\qquad \dgen(\widehat B_n)\leq 2,\qquad \widehat A_n\cap \widehat B_n=W_n\cong \Z^n.$$
In particular, $\dgen(\widehat A_n\cap \widehat B_n)=n$.
\end{prop}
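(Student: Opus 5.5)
The proof will follow Proposition~\ref{prop:finitepieces} almost verbatim, with $\F_{p_n}$ replaced by $\Z$. The one new feature is that over $\Z$ we must show the orbit of $e_1$ generates all of $W_n$ as a group, not merely a finite-index sublattice. I will set
$$\widehat A_n=\langle e_1,\widehat P_n\rangle,\qquad \widehat B_n=\langle e_1,\widehat Q_n\rangle\leq \widehat D_n,$$
so that $\dgen(\widehat A_n),\dgen(\widehat B_n)\leq 2$ is automatic.

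\textbf{Countable, torsion-free, polycyclic.} This is already noted before the statement. $\widehat D_n$ is an extension of $\Z^n$ by $\Z^2$, so it is polycyclic and countable. A torsion element projects to a torsion element of $\widehat L_n\cong\Z^2$ (Lemma~\ref{lem:tflinear}), hence to $1$, and therefore lies in $W_n$, which is torsion-free.

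\textbf{$W_n\le\widehat A_n$ and $W_n\le\widehat B_n$.} The subgroup of $W_n$ generated by $e_1$ together with its conjugates under $\langle \widehat P_n\rangle$ is the $\Z[\widehat P_n^{\pm1}]$-submodule generated by $e_1$. Since $\widehat P_n$ is unipotent, $\widehat P_n^{-1}$ is an integer polynomial in $J_n$. Hence this submodule equals the $\Z[J_n]$-submodule generated by $e_1$, which contains $J_n^ke_1=e_{k+1}$ and so is all of $W_n$.

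For $\widehat B_n$, put $R=\widehat Q_n-I=J_n(I+J_n)$. Then $R^ke_1=e_{k+1}+(\text{integer combination of } e_{k+2},\dots,e_n)$. These vectors form a unitriangular basis change with determinant $1$, so they form a $\Z$-basis of $W_n$. This unimodularity is the only point that needs checking beyond the finite case, and it is exactly why the leading coefficient $1$ matters. Consequently
$$\widehat A_n=W_n\rtimes\langle\widehat P_n\rangle,\qquad \widehat B_n=W_n\rtimes\langle\widehat Q_n\rangle.$$

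\textbf{The intersection.} Let $\pi\colon\widehat D_n\to\widehat L_n$ be the projection. For $x\in\widehat A_n\cap\widehat B_n$ we have $\pi(x)\in\langle\widehat P_n\rangle\cap\langle\widehat Q_n\rangle=1$ by Lemma~\ref{lem:tflinear}, so $x\in W_n$. Together with the previous step this gives $\widehat A_n\cap\widehat B_n=W_n\cong\Z^n$.

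\textbf{The rank.} $\dgen(\Z^n)=n$, since $\Z^n$ surjects onto $(\Z/2)^n$, which needs $n$ generators.

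No step is a real obstacle. The only care needed is the integrality and unimodularity in the generation argument for $\widehat B_n$, handled above.
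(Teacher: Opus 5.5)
Your proposal is correct and follows the paper's proof essentially step for step. You use the same subgroups $\widehat A_n=\langle e_1,\widehat P_n\rangle$ and $\widehat B_n=\langle e_1,\widehat Q_n\rangle$, the same triangularity argument showing $W_n$ lies in both, and the same projection to $\widehat L_n$ combined with Lemma~\ref{lem:tflinear}; your explicit remarks that $\widehat P_n^{-1}\in\Z[J_n]$, that the change of basis is unitriangular and hence unimodular, and that $\dgen(\Z^n)=n$ via $(\Z/2)^n$ spell out what the paper leaves implicit.
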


\begin{proof}
Define
$$ \widehat A_n=\langle e_1,\widehat P_n\rangle, \qquad \widehat B_n=\langle e_1,\widehat Q_n\rangle.$$
As in Proposition~\ref{prop:finitepieces}, the element $e_1$ generates all of $W_n$ under the action of $\widehat P_n$, because $\widehat P_n-I=J_n$. For $\widehat Q_n$, put $\widehat R_n=\widehat Q_n-I=J_n+J_n^2$. The vectors
$$ e_1,\ \widehat R_ne_1,\ \widehat R_n^2e_1,\ldots,\ \widehat R_n^{n-1}e_1$$
form a basis of $W_n$ by triangularity. Therefore
$$ \widehat A_n=W_n\rtimes \langle \widehat P_n\rangle, \qquad \widehat B_n=W_n\rtimes \langle \widehat Q_n\rangle.$$
Projecting $\widehat D_n$ to $\widehat L_n$ and using Lemma~\ref{lem:tflinear}, we obtain
$$ \widehat A_n\cap \widehat B_n=W_n\cong \Z^n.$$
Therefore, $\dgen(\widehat A_n\cap \widehat B_n)=n$.
\end{proof}

\subsection{Obraztsov's embedding theorem II}

We shall use the following form of Obraztsov's later embedding theorem \cite[Theorem C]{Ob96}. 

\begin{thm}[Obraztsov, \cite{Ob96}]\label{bb:obraztsovC}
Let $\{G_i\}_{i\in I}$ be a countable set of nontrivial countable groups containing either three groups or two groups of which one has order at least 3, and let $H$ be an arbitrary countable (for example, trivial) group. Then the free amalgam $\Omega^1$ of the groups $H$ and $G_i, i\in I$, can be embedded in a group $G=\langle\Omega\rangle$, where $\Omega=\Omega^1\backslash\{1\}$, with the following properties:
\begin{enumerate}
    \item the free amalgam of the groups $G_i$ is embedded in a simple normal infinite subgroup $L=\langle\Omega\backslash H\rangle$ of $G$ and $G$ is the semidirect product of $H$ and $L$;
    \item $\Aut(L)\cong G$ (and $\Out(L)\cong H$) and for each $g\in H\backslash\{1\}$, $g$ is a regular automorphism of $L$;
    \item if $x,y\in L$ with $x\in G_i\backslash\{1\}, y\notin G_i$ for some $i\in I$, then either $L$ is generated by the pair $(x,y)$ or $x$ and $y$ are involutions, or $x$ and $xy$ are involutions in $G$;
    \item  every proper subgroup of $L$ is either infinite cyclic or infinite dihedral (if one of the groups $G_i, i \in I$, or $H$ has involutions), or contained in a subgroup conjugate in $G$ to some $G_i, i \in I$.
\end{enumerate}
\end{thm}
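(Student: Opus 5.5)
This statement is imported verbatim from \cite[Theorem C]{Ob96}, and in the paper I would not reprove it: the plan is to check that our situation matches Obraztsov's hypotheses and then cite. If I were to reconstruct the argument, I would follow the Ol'shanskii--Obraztsov method of graded small cancellation diagrams over free products, which is what underlies both Theorem~\ref{Ob:embedding} and the relative version.

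\textbf{Step 1: set up the inductive construction.} Start from the free product $F(0)=H*(\ast_{i\in I}G_i)$, and let $H$ act on the free product of the $G_i$ through the evident semidirect structure. Enumerate all pairs $(x,y)$ with $x\in G_i\setminus\{1\}$ and $y\notin G_i$ (up to the relevant conjugacy). Also enumerate all elements that must be forced into such pairs' subgroups. Then define groups $F(j)$ inductively by adding, at rank $j$, relators of the form $z=w_j(x,y)$. Here $w_j$ is a long word with a small-cancellation condition relative to the free factors. Choosing these relators $H$-invariantly ensures that $H$ still acts, and acts regularly.

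\textbf{Step 2: the direct limit and the easy properties.} Let $G$ be the direct limit. The standard graded diagram machinery (contiguity subdiagrams, the lemma that reduced diagrams have a cell with long boundary contiguity) shows three things:
\begin{itemize}
\item each $G_i$ and $H$ embed in $G$;
\item distinct factors intersect trivially;
\item words not conjugate into a factor generate either cyclic subgroups or, when involutions are present, infinite dihedral ones.
\end{itemize}
Simplicity of $L$ and the $2$-generation in item (3) come from the relators $z=w_j(x,y)$, which place every element in $\langle x,y\rangle$. The exceptions in item (3) (involution pairs) are exactly the pairs that generate dihedral groups and so cannot be used.

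\textbf{Step 3: the main obstacle.} The hardest part is item (4): classifying all proper subgroups of $L$. For a subgroup not conjugate into some $G_i$, one picks two elements that are not in a common cyclic or dihedral subgroup. One then shows they form a pair covered by some relator, hence generate $L$. This requires the detailed analysis of periodic and centralizer structure in graded diagrams from \cite{Ob96}. The identification $\Aut(L)\cong G$ is also nontrivial: it needs a rigidity argument that every automorphism preserves the family of conjugates of the $G_i$.

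\textbf{Step 4: apply the theorem.} I would simply invoke \cite{Ob96} for all of the above. In the application, I only need to verify two hypotheses: that the family $\{\widehat D_n\}$ is countable, and that it has at least three nontrivial members.
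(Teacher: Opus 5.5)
Your proposal matches the paper: the paper gives no proof of this statement and imports it directly from \cite[Theorem~C]{Ob96}, exactly as you do, and the hypotheses you check in Step~4 are the ones the paper's application needs. Your Steps~1--3 are only a heuristic outline, so the citation rather than the sketch carries the argument; in particular, item~(4) needs relators attached to every pair generating a non-cyclic, non-dihedral subgroup not conjugate into a factor, not just to pairs with $x\in G_i$.
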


If in addition, $H$ is trivial, we can immediately obtain $G=L$ in Theorem \ref{bb:obraztsovC}. Furthermore, if each group $G_i,i\in I$ is torsion-free and contains no involutions, we have the following corollary.

\begin{cor}\label{cor ob embedding thm}
 Let $\{G_i\}_{i\in I}$ with $|I|\geq 3$ be a countable set of nontrivial torsion-free groups without involutions. Then, there exists a group $G$ with the following properties:
 \begin{enumerate}
 \item $G$ is simple, and every $G_i,i\in I$ can be embedded in $G$;
     \item $G$ is torsion-free;
     \item if $x,y\in G$ with $x\in G_i\backslash\{1\}, y\notin G_i$ for some $i\in I$, then $G$ is $2$-generated by the pair $(x,y)$;
     \item every proper subgroup of $G$ is either infinite cyclic or contained in a subgroup conjugate in $G$ to some $G_i, i \in I$.
 \end{enumerate}
\end{cor}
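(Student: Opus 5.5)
Corollary~\ref{cor ob embedding thm} follows from Theorem~\ref{bb:obraztsovC} applied with $H$ trivial, by reading off each property. Since $|I|\geq 3$, the hypothesis of Theorem~\ref{bb:obraztsovC} holds for the family $\{G_i\}_{i\in I}$, so there is a group $G=\langle\Omega\rangle$ with properties (1)--(4) there. With $H=1$, property (1) says that $G$ is the semidirect product of $H$ and $L$, so $G=L$. Hence $G$ is simple and infinite, and it contains the free amalgam of the $G_i$, so each $G_i$ embeds in $G$. This gives item (1).

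For item (4), property (4) of Theorem~\ref{bb:obraztsovC} lists the proper subgroups of $L=G$. The infinite dihedral case occurs only when some $G_i$ or $H$ has involutions. The $G_i$ have no involutions and $H$ is trivial, so every proper subgroup of $G$ is infinite cyclic or contained in a conjugate of some $G_i$.

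For item (2), I use item (4). Let $g\in G$ be nontrivial of finite order. Then $\langle g\rangle$ is a finite, hence proper, subgroup of the infinite group $G$. It is not infinite cyclic, so it lies in some $xG_ix^{-1}$. Then $x^{-1}gx$ is a nontrivial torsion element of $G_i$, contradicting the torsion-freeness of $G_i$. So $G$ is torsion-free.

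For item (3), property (3) of Theorem~\ref{bb:obraztsovC} gives that $(x,y)$ generates $L=G$ unless $x$ and $y$, or $x$ and $xy$, are involutions in $G$. By item (2), $G$ has no involutions, so only the first alternative remains.

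The only point needing care is the order of these steps. Torsion-freeness of $G$ is not given directly by Theorem~\ref{bb:obraztsovC}, so it must be derived from the subgroup classification before item (3) can use the absence of involutions in $G$ itself, rather than only in the $G_i$.
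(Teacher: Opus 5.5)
Your proof is correct and is essentially the paper's argument: take $H$ trivial so that $G=L$, read off items (1), (3), (4) from Theorem~\ref{bb:obraztsovC}, and deduce torsion-freeness from the subgroup classification in item (4), exactly as the paper does. Two small remarks. First, the ordering issue you flag is not actually needed: both exceptional alternatives in Theorem~\ref{bb:obraztsovC}(3) require $x$ itself to be an involution, which is already excluded because $x\in G_i\setminus\{1\}$ and $G_i$ is torsion-free. Second, when checking the hypotheses of Theorem~\ref{bb:obraztsovC} you, like the paper, tacitly use that each $G_i$ is countable; the theorem requires this but the corollary's statement omits it, and it cannot be dropped in general, since item (3) makes $G$ $2$-generated and hence countable (it does hold for the $\widehat D_n$ in the application).
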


\begin{proof}
Items (1), (3) and (4) follow immediately from the corresponding items of Theorem \ref{bb:obraztsovC}. It remains to prove item (2). Indeed, if $x\in G$ has finite order, then $\langle x\rangle$ is a proper finite cyclic subgroup of $G$. Then by item (4), $\langle x\rangle$ is contained in a conjugate of some $G_i$, it follows that $x=1$ because $G_i$ is torsion-free and can be embedded in $G$. Therefore, the resulting group $G$ is again torsion-free.  
\end{proof}

\subsection{Proof of Theorem \ref{thm:torsionfree}}
Now apply Corollary \ref{cor ob embedding thm} to the family
$$ \{\widehat D_3,\widehat D_4,\widehat D_5,\ldots\}.$$
We identify each $\widehat D_n$ with its embedded copy in the resulting group $\mathcal G'$.

\begin{prop}\label{prop:tfhowson}
The group $\mathcal G'$ is torsion-free, $2$-generated and simple. Moreover, $\mathcal G'$ is a Howson group.
\end{prop}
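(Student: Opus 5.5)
We apply Corollary~\ref{cor ob embedding thm} to the family $\{\widehat D_n\}_{n\geq 3}$ and read off the properties. First we check the hypotheses. The index set is infinite, so $|I|\geq 3$. Each $\widehat D_n$ is nontrivial, countable and torsion-free by Proposition~\ref{prop:tfpieces}, and in particular has no involutions. Items (1) and (2) of Corollary~\ref{cor ob embedding thm} then give directly that $\mathcal G'$ is simple and torsion-free.

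For $2$-generation we use item (3) of the corollary. Pick any $x\in \widehat D_3\setminus\{1\}$ and any $y\in \widehat D_4\setminus\{1\}$. We need $y\notin \widehat D_3$. This follows from the fact that the free amalgam of the $G_i$ embeds in $G$ (Theorem~\ref{bb:obraztsovC}(1) with $H$ trivial), so distinct factors meet trivially. Then $\mathcal G'=\langle x,y\rangle$. (If one prefers to avoid the amalgam remark, one can instead note that $\mathcal G'\neq\widehat D_3$: otherwise $\widehat D_3$ would be simple, but it is a nontrivial polycyclic group and so is not simple. Hence some $y\notin\widehat D_3$ exists.)

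For the Howson property, let $H,K\leq\mathcal G'$ be finitely generated.
\begin{itemize}
\item If $H=K=\mathcal G'$, the intersection is $\mathcal G'$, which is $2$-generated.
\item Otherwise $H\cap K$ is a proper subgroup. By item (4) it is either infinite cyclic or contained in a conjugate $g\widehat D_ng^{-1}$.
\end{itemize}
The key point, and the only genuine difference from Proposition~\ref{prop:howson}, is that $\widehat D_n$ is now infinite, so containment alone does not give finite generation. Instead we use that $\widehat D_n$ is polycyclic, and every subgroup of a polycyclic group is finitely generated. Since $g\widehat D_ng^{-1}\cong\widehat D_n$, the intersection $H\cap K$ is finitely generated in all cases. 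This step is where care is needed, and polycyclicity resolves it without any analysis of $H$ and $K$ themselves.
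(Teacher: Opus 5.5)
Your proposal is correct and follows the paper's proof essentially verbatim: simplicity, torsion-freeness and $2$-generation are read off from Corollary~\ref{cor ob embedding thm}, and the Howson property comes from item (4) combined with the fact that every subgroup of a polycyclic group is finitely generated. Your explicit choice of $y\in\widehat D_4\setminus\{1\}$, with $y\notin\widehat D_3$ justified by the factors of the free amalgam meeting trivially, supplies a detail the paper leaves implicit. One small correction to your parenthetical alternative: a nontrivial polycyclic group can be simple (for example $\Z/p\Z$), so the correct reason $\widehat D_3$ is not simple is that it is infinite and solvable.
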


\begin{proof}
Since all the $G_i, i \in I$ are  torsion-free groups without involutions, the group $\mathcal G'$ is torsion-free, $2$-generated and simple by Corollary \ref{cor ob embedding thm}.

Let $H,K\leq \mathcal G'$ be finitely generated. If $H=K=\mathcal G'$, then $H\cap K=\mathcal G'$ is finitely generated.  Now suppose one of $H$ and $K$ is proper. Hence $H\cap K$ is a proper subgroup of $\mathcal G'$. By Corollary \ref{cor ob embedding thm}, the subgroup $H\cap K$ is either infinite cyclic or contained in a conjugate of some $\widehat D_n$. In the first case it is finitely generated.  In the second case it is also finitely generated because $\widehat D_n$ is polycyclic by Proposition \ref{prop:tfpieces}, and all subgroups of a finitely generated polycyclic group are finitely generated by \cite[Proposition 5.5]{CSD21}. Therefore $\mathcal G'$ is Howson.
\end{proof}

\begin{prop}\label{prop:tfnotstrong}
The group $\mathcal G'$ is not strongly Howson. In fact, $\xi_{\mathcal G'}(m,k)=\infty$ for any integers $m, k\geq 2$.
\end{prop}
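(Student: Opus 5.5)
The plan is to mirror the proof of Proposition~\ref{prop:notstrong}, using Proposition~\ref{prop:tfpieces} in place of Proposition~\ref{prop:finitepieces}. Fix $n\geq 3$ and work inside the embedded copy of $\widehat D_n$ in $\mathcal G'$, given by Corollary~\ref{cor ob embedding thm}(1). Take the subgroups
$$\widehat A_n=\langle e_1,\widehat P_n\rangle,\qquad \widehat B_n=\langle e_1,\widehat Q_n\rangle .$$
They are generated by two elements of $\widehat D_n$, and their images under the embedding are generated by the images of those elements. Hence $\dgen(\widehat A_n)\leq 2$ and $\dgen(\widehat B_n)\leq 2$ also as subgroups of $\mathcal G'$.

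The key observation is that intersections are computed identically inside $\widehat D_n$ and inside $\mathcal G'$. Both subgroups lie in the image of the injective homomorphism $\widehat D_n\hookrightarrow\mathcal G'$, and an injective map preserves intersections of subgroups of its domain. So in $\mathcal G'$ we still have $\widehat A_n\cap\widehat B_n$ equal to the image of $W_n\cong\Z^n$.

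Next, the rank of a group is intrinsic and does not depend on the ambient group. Since $\Z^n$ has abelianization $\Z^n$, it needs at least $n$ generators, so $\dgen(\widehat A_n\cap\widehat B_n)=n$. Letting $n\to\infty$ gives $\xi_{\mathcal G'}(2,2)=\infty$.

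Finally, $\xi_{\mathcal G'}$ is monotone in each argument, because a subgroup of rank at most $2$ also has rank at most $m$ for $m\geq 2$. This yields $\xi_{\mathcal G'}(m,k)\geq\xi_{\mathcal G'}(2,2)=\infty$ for all $m,k\geq 2$. In particular $\mathcal G'$ is not strongly Howson.

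There is no real obstacle here. The only point requiring care is that the rank bounds and the intersection transfer correctly through the embedding, and both follow because the embedding is an injective homomorphism.
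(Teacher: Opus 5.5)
Your proposal is correct and follows essentially the same approach as the paper: take the $2$-generated subgroups $\widehat A_n,\widehat B_n$ inside the embedded copy of $\widehat D_n$, observe that their intersection is $W_n\cong\Z^n$ of rank $n$, and let $n\to\infty$. Your remarks that an injective homomorphism preserves intersections, that rank is intrinsic, and that $\xi_{\mathcal G'}$ is monotone only make explicit what the paper leaves implicit.
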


\begin{proof}
For every $n\geq 3$, by Proposition~\ref{prop:tfpieces}, there exist subgroups of $\widehat D_n\leq \mathcal G'$ such that
$$ \dgen(\widehat A_n)\leq 2,\qquad \dgen(\widehat B_n)\leq 2,\qquad \dgen(\widehat A_n\cap \widehat B_n)=n.$$
Since $n$ is arbitrary, $\xi_{\mathcal G'}(m,k)=\xi_{\mathcal G'}(2,2)=\infty$ for any $m, k\geq 2$. Therefore, $\mathcal G'$ is not strongly Howson.
\end{proof}



\begin{proof}[\textbf{Proof of Theorem~\ref{thm:torsionfree}}]
This follows from Propositions~\ref{prop:tfhowson} and \ref{prop:tfnotstrong}.
\end{proof}
Since $\mathcal G'$ is infinite and simple, it is not residually finite. As corollary \ref{cor:non-residual finiteness}, we have:
\begin{cor}\label{cor non residually finite for torsion-free}
 The group $\mathcal G'$ is not residually finite, and hence not linear over any field.   
\end{cor}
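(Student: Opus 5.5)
The plan is to follow the proof of Corollary~\ref{cor:non-residual finiteness}, keeping only the residual finiteness part and Mal'cev's theorem. The two ingredients are that $\mathcal G'$ is infinite and that it is simple. Simplicity is given by Proposition~\ref{prop:tfhowson} (equivalently Corollary~\ref{cor ob embedding thm}(1)). Infiniteness holds because $\mathcal G'$ contains an embedded copy of $\widehat D_3\cong \Z^3\rtimes\Z^2$, which is nontrivial and torsion-free, hence infinite. One could also note that $\mathcal G'$ is torsion-free and nontrivial.

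First I will show that $\mathcal G'$ is not residually finite. Suppose it were. Choose any $g\neq 1$ in $\mathcal G'$. Residual finiteness gives a normal subgroup $N$ of finite index with $g\notin N$, so $N$ is a proper normal subgroup. By simplicity, $N=1$. Then $\mathcal G'\cong \mathcal G'/N$ is finite, which contradicts infiniteness. Equivalently, as in Corollary~\ref{cor:non-residual finiteness}, $\mathcal G'$ has no proper subgroup of finite index. Finite generation is not needed for this step.

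Second, I will deduce non-linearity. Proposition~\ref{prop:tfhowson} shows that $\mathcal G'$ is finitely generated, indeed $2$-generated. Mal'cev's theorem \cite{Ma40} says every finitely generated linear group over any field is residually finite. So if $\mathcal G'$ embedded in some $\GL_d(F)$, it would be residually finite, contradicting the first step. Hence $\mathcal G'$ is not linear over any field.

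I do not expect a real obstacle. The only point to check is that the hypotheses of Mal'cev's theorem hold: finite generation, which comes from Corollary~\ref{cor ob embedding thm}(3) once $|I|\geq 3$ (true, since we use infinitely many $\widehat D_n$), and the absence of any restriction on the field.
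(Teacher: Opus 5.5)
Your proposal is correct and follows the paper's argument: an infinite simple group is not residually finite, and Mal'cev's theorem applied to the $2$-generated group $\mathcal G'$ then rules out linearity, exactly as in Corollary~\ref{cor:non-residual finiteness}. You also justify infiniteness explicitly via the embedded copy of $\widehat D_3$, a detail the paper leaves implicit.
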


\section{Questions and further directions}

The two constructions separate the Howson property from the strongly Howson property in the class of finitely generated groups, and also in the torsion-free finitely generated class. 
Both examples constructed here are far from residually finite. The group $\mathcal G$ has no nontrivial finite quotient by Corollary~\ref{cor:non-residual finiteness}, while the group $\mathcal G'$ is infinite and simple, and hence also has no non-trivial finite quotient. This naturally leads to the following questions.

\begin{ques}
Does there exist a finitely generated, residually finite, Howson group which is not strongly Howson?
\end{ques}


\begin{ques}
    Under what conditions are the Howson property and the strongly Howson property equivalent?
\end{ques}

The constructions of $\mathcal G$ and $\mathcal G'$ are still highly non-explicit. It would be interesting to find such examples inside more familiar classes of groups (for example some geometric groups), or to prove that familiar Howson classes satisfy stronger uniform bounds. 

\begin{ques}
Is it possible to construct a torsion-free, finitely generated (or finitely presented) Howson group that fails to be strongly Howson through a more explicit geometric construction, for example without using Obraztsov's embedding theorem?
\end{ques}


\vspace{0.2cm}
\noindent\textbf{Acknowledgment and AI disclosure.} The authors would like to thank Shengkui Ye for helpful discussions. ChatGPT assisted in constructing the example in Section \ref{sect 2}. The proofs in this paper were checked and corrected by the human authors.


\begin{thebibliography}{99}

\bibitem{AMS14} Y. Antol\'in, A. Martino and I. Schwabrow, \emph{Kurosh rank of intersections of subgroups of free products of right-orderable groups}, Math. Res. Lett. {\bf 21} (2014), no.~4, 649--661.

\bibitem{ASS15} V. Ara\'ujo, P.~V. Silva and M. Sykiotis, \emph{Finiteness results for subgroups of finite extensions}, J. Algebra {\bf 423} (2015), 592--614.

\bibitem{Ar98} G.~N. Arzhantseva, \emph{Generic properties of finitely presented groups and Howson's theorem}, Comm. Algebra {\bf 26} (1998), no.~11, 3783--3792.

\bibitem{Ba66} B. Baumslag, \emph{Intersections of finitely generated subgroups in free products}, J. London Math. Soc. {\bf 41} (1966), 673--679.



\bibitem{BB80} R.~G. Burns and A.~M. Brunner, \emph{Two remarks on Howson's group property}, Algebra i Logika {\bf 18} (1979), no.~5, 513--522, 632.

\bibitem{CSD21} T. Ceccherini-Silberstein and M. D’Adderio, ``Polycyclic Groups'' in \emph{Topics in Groups and Geometry}, (2021), Springer Monographs in Mathematics.

\bibitem{Fr14} J. Friedman, \emph{Sheaves on graphs, their homological invariants, and a proof of the Hanna Neumann conjecture: with an appendix by Warren Dicks}, Mem. Amer. Math. Soc. {\bf 233} (2015), no.~1100, xii+106 pp..


\bibitem{Ho54} A.~G. Howson, \emph{On the intersection of finitely generated free groups}, J. London Math. Soc. {\bf 29} (1954), 428--434.

\bibitem{Ka97} I. Kapovich, \emph{Amalgamated products and the Howson property}, Canad. Math. Bull. {\bf 40} (1997), no.~3, 330--340.

\bibitem{Ma40} A. I. Mal'cev, \emph{On isomorphic matrix representations of infinite groups}, Mat. Sb. 8 (1940), 405–422.

\bibitem{Mi12} I. Mineyev, \emph{Submultiplicativity and the Hanna Neumann conjecture}, Ann. of Math. (2) {\bf 175} (2012), no.~1, 393--414.

\bibitem{Ne56} H. Neumann, \emph{On the intersection of finitely generated free groups}, Publ. Math. Debrecen {\bf 4} (1956), 186--189,

\bibitem{Ni13} B. Nica, \emph{Linear groups - Malcev’s theorem and Selberg’s lemma}, 2013, preprint, arXiv:1306.2385.

\bibitem{Ob90} V.~N. Obraztsov, \emph{An embedding theorem for groups and its corollaries}, Math. USSR-Sb. {\bf 66} (1990), no.~2, 541--553; translated from Mat. Sb. {\bf 180} (1989), no.~4, 529--541, 560.

\bibitem{Ob96} V.~N. Obraztsov, \emph{Embedding into groups with well-described lattices of subgroups}, Bull. Austral. Math. Soc. {\bf 54} (1996), no.~2, 221--240.




\bibitem{ZZ26} Q. Zhang and D. Zhao, \emph{Howson groups which are not strongly Howson}, Comm. Algebra {\bf 54} (2026), no.~5, 1902--1907.

\end{thebibliography}
\end{document}